\definecolor{red}{rgb}{1,0,0}
\definecolor{green}{rgb}{0,1,0}
\definecolor{blue}{rgb}{0,0,1}
\newtheorem{definition}{Definition}
\newtheorem{theorem}{Theorem}
\newtheorem{lemma}{Lemma}
\newcommand{\scri}{\mathscr{I}}
\newcommand{\op}{\operatorname}
\newcommand{\Aii}[2]{{\textstyle #1\!\!\!\!-\!\!\!-\!\!\hspace{-0.3pt}#2}}
\newcommand{\Aiii}[3]{{\textstyle #1\!\!\!\!-\!\!\!-\!\!\!\!#2\!\!\!\!-\!\!\!-\!\!\!\!#3}}
\newcommand{\Aooo}{\Aiii{\bullet}{\bullet}{\bullet}}
\newcommand{\Aoox}{\Aiii{\bullet}{\bullet}{\times}}
\newcommand{\Aoxo}{\Aiii{\bullet}{\times}{\bullet}}
\newcommand{\Aoxx}{\Aiii{\bullet}{\times}{\times}}
\newcommand{\Axoo}{\Aiii{\times}{\bullet}{\bullet}}
\newcommand{\Axox}{\Aiii{\times}{\bullet}{\times}}
\newcommand{\Axxo}{\Aiii{\times}{\times}{\bullet}}
\newcommand{\Axxx}{\Aiii{\times}{\times}{\times}}
\newcommand{\Aoo}{\Aii{\bullet}{\bullet}}
\newcommand{\Axo}{\Aii{\times}{\bullet}}
\newcommand{\Aox}{\Aii{\bullet}{\times}}
\newcommand{\Axx}{\Aii{\times}{\times}}
\newcommand{\osarray}[2]{\overset{\displaystyle{\overset{\displaystyle{#1}}{#2}}}{\tfrac{}{}}}
\newcommand{\dynkinexample}{
$$
\def\objectstyle{\displaystyle}
\xymatrix{
&&&&&\bullet\\
\osarray{a}{\bullet}\ar@{-}[r]&\bullet\ar@{-}[r]&\bullet\ar@{..}[r]&\bullet\ar@{-}[r]&\bullet\ar@{-}[ur]\ar@{-}[dr]&\\
&&&&&\bullet
}
$$
}
\newcommand{\comment}[1]{}
\title[Asymptotically shearfree congruences]{Asymptotically shearfree congruences in $(2,2)$ spacetimes and Burgers' equation}
\author{Jonathan Holland}
\address{Department of Mathematics, 
University of Pittsburgh}
\author{George Sparling}
\begin{document}
\begin{abstract}
The paper proves that any asymptotically shearfree congruence at the conformal infinity $\scri$ in a $(2,2)$-signature spacetime is determined locally by a solution to the pair of forced inviscid Burgers' equations $L_u+LL_x=\sigma(u,x,y,L)$ and $M_u+MM_y=\tilde{\sigma}(u,x,y,M)$ where $u,x,y$ are Bondi coordinates of $\scri$.  The functions $\sigma$ and $\tilde{\sigma}$ are determined naturally by the projective structure on the $\alpha$ and $\beta$ surfaces that foliate $\scri$.
\end{abstract}
\maketitle

\nocite{*}

\section{Introduction}
The purpose of this article is to study asymptotically shearfree congruences of null geodesics in ultrahyperbolic ($(2,2)$-signature) spacetimes in terms of structure at infinity.  A congruence is a foliation of spacetime by null geodesics.  It is shearfree if (roughly) a beam of light does not distort from a circular profile to an elliptical one \cite{Sachs}.  Shearfree congruences in Lorentzian ($(1,3)$-signature) spacetimes have a long history of importance in relativity theory, because they serve as characteristics for algebraically special solutions of field equations in spacetime, including the Einstein vacuum equations (\cite{RobinsonTrautman1960}, \cite{RobinsonTrautman1962}, \cite{KerrSchild}, \cite{Sommers}).  A detailed history of the role of shearfree congruences can be found in Trautman \cite{Trautman}.

We here study the problem of describing an (asymptotically) shearfree congruence in terms of a minimal set of initial data at conformal infinity.  The conformal infinity of a spacetime is a null cone known as $\scri$ or ``scri''.  An asymptotically shearfree congruence is specified by its scattering data at $\scri$.  In turn, this scattering data defines an embedding of $\scri$ into the space of unparametrized null geodesics (called projective null twistor space, or $\mathbb{PN}$).  In spacetimes of Lorentzian signature, the space of null geodesics carries a CR structure, and the shearfree condition is equivalent to the embedding being an embedding in the category of CR manifolds; this result is known as  Kerr's theorem (\cite{PenroseAlg}; see \cite{HollandSparling} for a proof of the theorem in this general form).  In the case of $(2,2)$ signature spacetimes, the space of null geodesics carries in a natural manner a real analog of a CR structure, and the shearfree condition is equivalent to the condition that this embedding be a real CR embedding.  

A crossection of $\scri$ is canonically identified with an indefinite quadric in projective $3$-space, and so is foliated by two families of lines.  Thus it can be identified with $\mathbb{P}^1\times\mathbb{P}^1$, and so $\scri$ itself is a trivial line bundle over $\mathbb{P}^1\times\mathbb{P}^1$ that is foliated by two families of two-planes, so-called the $\alpha$-planes and $\beta$-planes.  When the spacetime is flat, the embedding of $\scri$ into the space of null geodesics is determined by a pair of functions $L$ and $M$ in the $\alpha$-planes and $\beta$-planes (respectively), satisfying a pair of independent Burgers' \cite{Burgers} equations:
\begin{equation}\label{BurgersEquations}
L_u+LL_x = 0\qquad M_u+MM_y=0.
\end{equation}
The initial value problem for Burgers' equation is well-understood locally (where shocks are forbidden); see, for instance, \cite{CourantHilbert}.  The characterstics are lines, and the solution is constant along these lines (and equal to the slope of that line).  So a solution is locally determined by specifying initial Cauchy data on a cross section of the line bundle $\scri\to\mathbb{P}^1\times\mathbb{P}^1$.  Summarizing,

\begin{theorem}\label{localexistence}
Let $S$ be a crossection of $\scri$ and $\kappa_S:S\to\mathbb{PN}$ a section of the natural fibration of $\mathbb{PN}$ over $\scri$ that is transverse to the CR structure.  Then there is an open neighborhood $U$ of $S$ in spacetime and a shearfree congruence in $U$ that agrees with $\kappa_S$ on $S$.
\end{theorem}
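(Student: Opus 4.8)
The plan is to reduce the geometric statement to the local Cauchy problem for the (forced) Burgers' system and then solve that problem by the method of characteristics. Recall that, via the fibration $\mathbb{PN}\to\scri$, the scattering data of an asymptotically shearfree congruence is precisely a section of this fibration over $\scri$ whose image is a CR submanifold, and that such sections are exactly the solutions $(L,M)$ of the forced Burgers' system $L_u+LL_x=\sigma(u,x,y,L)$, $M_u+MM_y=\tilde{\sigma}(u,x,y,M)$ in Bondi coordinates. Accordingly I would choose Bondi coordinates $(u,x,y)$ adapted to the two foliations, so that the $\alpha$-surfaces are the level sets $\{y=\mathrm{const}\}$ and the $\beta$-surfaces are $\{x=\mathrm{const}\}$, and arrange $S=\{u=0\}$. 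Producing the desired congruence near $S$ is then equivalent to producing a solution $(L,M)$ of the system on a neighbourhood $\{|u|<\epsilon\}$ of $S$ in $\scri$ whose restriction to $S$ is the section $\kappa_S$.

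First I would read off the Cauchy data. A section of $\mathbb{PN}\to\scri$ over $S$ assigns to each point of the $2$-dimensional crossection $S$ a null geodesic through it, and in the coordinates above this is recorded by the two functions $L_0:=L|_{u=0}$ and $M_0:=M|_{u=0}$ on $S$ (the fibre of $\mathbb{PN}\to\scri$ being parametrized by $(L,M)$). The hypothesis that $\kappa_S$ is transverse to the CR structure is exactly what makes these data non-characteristic: transversality says $\kappa_S(S)$ is nowhere tangent to the rank-$2$ CR distribution on $\mathbb{PN}$, which in these coordinates amounts to the statement that the Burgers' characteristics issuing from $S$ meet $S$ transversally. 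Thus $S$ is a legitimate initial surface and $(L_0,M_0)$ legitimate initial values.

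The decisive simplification is that the forcing terms $\sigma$ and $\tilde{\sigma}$ depend on $(u,x,y,L)$ and $(u,x,y,M)$ respectively, so the two equations decouple, and in each the transverse coordinate ($y$ for $L$, $x$ for $M$) enters only as a parameter. Hence each is a single scalar quasilinear first-order equation in two variables, which I would solve by characteristics: for $L$, with $y$ held fixed, I would integrate the ODE system $\dot{x}=L$, $\dot{L}=\sigma(u,x,y,L)$ in the parameter $u$ from the initial condition $(x,L)=(x_0,L_0(x_0,y))$ at $u=0$, and dually for $M$. Since the congruence need only be constructed locally, I would then invoke short-time existence: for $|u|<\epsilon$ the characteristic map $x_0\mapsto x(u;x_0)$ is a diffeomorphism by the inverse function theorem (equivalently, no shocks have yet formed), so $L$ and $M$ are well-defined and smooth on a full two-sided neighbourhood of $S$, and compactness of $S\cong\mathbb{P}^1\times\mathbb{P}^1$ makes $\epsilon$ uniform. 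Feeding $(L,M)$ back through the correspondence yields a shearfree congruence in a spacetime neighbourhood $U$ of $S$ restricting to $\kappa_S$ on $S$.

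The main obstacle I anticipate is the control in this last step: guaranteeing that the characteristics do not cross within $\{|u|<\epsilon\}$, i.e.\ that the data generate no shocks on either side of $S$, together with a careful verification that transversality to the CR structure is precisely the non-characteristic condition. Once the equivalence between shearfree congruences and solutions of the forced Burgers' system is in hand, the remaining work is the standard but essential short-time analysis of a scalar quasilinear equation, which the local prohibition of shocks renders routine.
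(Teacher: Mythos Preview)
Your proposal is essentially correct and follows the same line as the paper: reduce to the Cauchy problem for the Burgers' system on $\scri$, solve it locally by characteristics, and use compactness of $S\cong\mathbb{P}^1\times\mathbb{P}^1$ to get a uniform neighbourhood.

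Two points of comparison are worth noting. First, the paper treats Theorem~\ref{localexistence} in the \emph{flat} setting (the forced system \eqref{GeneralBurgers} enters only later, in the curved case), so $\sigma=\tilde\sigma=0$ and the characteristics are straight lines; the paper packages the local Cauchy solvability as Lemma~\ref{BurgersCauchy}, proved by looking at the graph inside the incidence quadric $Q\subset\mathbb{P}^2\times(\mathbb{P}^2)^*$ and applying the rank theorem, rather than by writing out the characteristic ODEs as you do. Your method of characteristics is equivalent and arguably more explicit. Second, the paper separates the argument into two distinct steps: (i) extend $\kappa_S$ to a rank-two section $\kappa$ over a neighbourhood $V$ of $S$ \emph{in} $\scri$ (this is the Burgers' step), and (ii) extend from $V\subset\scri$ to a genuine shearfree congruence in a spacetime neighbourhood $U\subset\mathbb{K}$, which the paper handles by the implicit function theorem locally plus a finite-cover/refinement argument. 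Your phrase ``feeding $(L,M)$ back through the correspondence'' covers step (ii) only implicitly, and the obstacle you flag (characteristics crossing) belongs to step (i); step (ii) needs its own short argument that the null geodesics determined by $\kappa$ actually foliate a spacetime neighbourhood, which is where the implicit function theorem enters.
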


Here the condition of transversality is that the restriction of $\kappa_S$ to each $\mathbb{P}^1$ on $S$ must avoid a tangent direction defined by the value of $\kappa_S$.  In general, Cauchy data $\kappa_S:S\cong \mathbb{P}^1\times\mathbb{P}^1\to \mathbb{PN}$ is specified by a pair of functions of two variables.  But for a global solution, Burgers' equation does not have non-constant solutions.  So in that case the Cauchy data $\kappa_S$ reduces to a pair of functions of {\em one} variable.

When the spacetime is {\em curved}, the equations \eqref{BurgersEquations} are deformed to a pair of {\em inhomogeneous} Burgers' equations by forcing terms $\sigma=\sigma(u,x,y,L)$ and $\tilde{\sigma}=\tilde{\sigma}(u,x,y,M)$:
\begin{equation}\label{GeneralBurgers}
L_u+LL_x = \sigma,\qquad M_u+MM_y = \tilde{\sigma}.
\end{equation}
General equations of this kind are considered in \cite{CourantHilbert}.  The characteristics of these equations are defined by second order ordinary differential equations.  The forcing terms $\sigma$ and $\tilde{\sigma}$ are at most cubic in $L$ and $M$, respectively, and the characteristics are the geodesics associated to a projective structure in the $\alpha$ and $\beta$ planes of $\scri$.

This article begins with a discussion of Burgers' equation.  Solutions of Burgers' equation are constant along lines, and the value of the solution is equal to the slope of that line.  This can be geometrized to form the notion of a {\em Burgers' function}, which is a function having precisely this property.  This is defined formally in terms of homogeneous spaces for the (projective) general linear group.  In this paper, we are interested in solutions of Burgers' equation {\em without shocks}, and this is considerably easier to analyze than the case with shocks.  However, the geometric approach can accommodate shock-like behavior by defining Burgers' functions for locally finite sheaves over the base space.  There is scope for such a generalization in the analysis of generalizations of congruences, such as Kerr congruences which are generically multiply-valued.

Because they are needed for the analysis of curved spacetime, Burgers' functions are then generalized to functions from a two-dimensional space equipped with a notion of geodesics (in the form of a vector field on the cotangent bundle) to the space of geodesics.  Working locally, there is a close connection with the theory of second-order ordinary differential equations modulo point equivalence.  Such a Burgers' function maps (locally) from a dynamical space into a space of solutions of a second order differential equation, with the property that the function be constant along each solution that it defines.  There is a dual Burgers' function that maps the other way, and the second order equation that mediates the dual Burgers' function is the dual second order differential equation of \'{E}lie Cartan \cite{CartanProjective} (see, for instance, \cite{CrampinSaunders} and the references therein for a modern account of duality of second order equations).  These two Burgers' functions are completely characterized by a caustic curve in the microtwistor space.

The article then defines the flat ultrahyperbolic spacetime and its conformal compactification $\mathbb{K}$ obtained by adding a null cone $\scri$ at infinity.  This space is called {\em Klein space}, and it is identified with the Klein quadric: the null quadric in the projective space of $\wedge^2\mathbb{T}$ where $\mathbb{T}$ is a $4$-dimensional real vector space.  The space $\mathbb{T}$ is the {\em twistor space} of $\mathbb{K}$.  All of the flag manifolds associated to $\mathbb{T}$ come into play at this point.  The article then draws the link between shearfree congruences in $\mathbb{K}$ and Burgers' functions at $\scri$.

The next section of the article is devoted to studying the curved case.  In that case, the spacetime $M$ carries a pair of distributions $\mathbf{D}$ and $\mathbf{D}'$ of $2$-planes whose integral manifolds (if they exist) are the $\alpha$ and $\beta$ surfaces in $M$.  We shall confine attention to the case where $\scri$ is a {\em shearfree hypersurface}, meaning that it is foliated by $\alpha$-surfaces and by $\beta$-surfaces.  Under suitable falloff conditions at infinity, this is automatically true for solutions of the Einstein vacuum equation.  The restriction of $\mathbf{D}'$ to each $\alpha$-surface in the foliation gives a projective structure on the surface, and likewise with the restriction of $\mathbf{D}$ to each $\beta$-surface.  The geodesics of these projective structures are the null geodesics of the conformal structure.  The analog \eqref{GeneralBurgers} of Burgers' equation can be formulated in this more general setting, with the forcing terms $\sigma$ and $\tilde{\sigma}$ being cubic polynomials defined in terms of the connection coefficients of the projective structure.

For curved spacetimes, shearfree congruences do not exist in general, and it is necessary to study {\em asymptotically shearfree congruences}: congruences of null geodesics whose shear vanishes on $\scri$.  The local result from the flat case then remains true in the case of curved spacetimes.

\section{Real CR manifolds}\label{RealCR}
A basic object of study is that of a {\em real CR manifold}.  This is a smooth manifold $X$ of dimension $2n+1$ together with a pair of smooth distributions $D$ and $D'$ of $n$-planes on $X$ such that:
\begin{itemize}
\item $D$ and $D'$ are integrable in the sense of Frobenius
\item $D\cap D'=0$
\end{itemize}

The Levi form associated to a real CR manifold is the bilinear form with values in the line bundle $TM/D\oplus D'$:
$$L(X,Y) = [X,Y] \pmod{D\oplus D'}.$$
By integrability, $D$ and $D'$ are isotropic subspaces for $L$.  The CR structure is called non-degenerate if $L$ is a non-degenerate pairing.  Non-degeneracy is equivalent to the distribution of $2n$-planes $D\oplus D'$ being a contact structure on $M$.

A non-degenerate real CR manifold therefore consists of a $(2n+1)$-dimensional contact manifold together with a pair of independent Legendrian distributions $D$ and $D'$, each of which is integrable.

{\em Example.}  Let $A$ be a real $2$-dimensional manifold and let $X=\mathbb{P}T^*A$ be the projective cotangent bundle.  Equip the cotangent bundle $T^*A$ with the canonical one-form $\theta$ and let $H$ be the vector field generating the scaling in the fibers.  Since $\theta(H)=0$ and $\mathscr{L}_H\theta=\theta$, the distribution of $3$-planes $\theta=0$ descends to a distribution of $2$-planes on $X$, a contact structure.  Let $D'$ be the tangent distribution to the fibers of the projection $X\to A$.  Let $D$ be a nonvanishing contact vector field on $X$ that is transverse to the fibers of $X\to A$.  The integral curves of $D$ are called {\em geodesics}.

Conversely, if $X$ is a non-degenerate real CR $3$-manifold, then around any point of $X$ there is a neighborhood that is contactomorphic to an open subset of $\mathbb{P}T\mathbb{R}^2$, the projective cotangent bundle of $\mathbb{R}^2$ with its canonical contact structure.  Moreover, we can ensure that in this neighborhood the distribution $D'$ is tangent to the fibers of the projection $\mathbb{P}T\mathbb{R}^2\to\mathbb{R}^2$.  This identification is unique up to local diffeomorphism of $\mathbb{R}^2$.

\subsection{Second order ODEs}\label{ODES} Locally, a non-degenerate real CR manifold $X$ reduces to an open subset of $\mathbb{P}T^*\mathbb{R}^2$ together with its canonical contact structure, and equipped with a direction vector field $D$.  Let $(u,x)$ be coordinates on $\mathbb{R}^2$ and introduce a local fiber coordinate $p$ on $\mathbb{P}T^*\mathbb{R}^2$ so that the canonical contact structure has representative $\theta=dx-p\,du$.  Then a representative of the direction field $D$ is given by
$$D = \frac{\partial}{\partial u} + p\frac{\partial}{\partial x} + \sigma(u,x,p)\frac{\partial}{\partial p}$$
up to an overall multiple, where $\sigma$ is a smooth function of three variables.  An integral curve of this vector field projects down to the $xy$-plane to a solution of the second order differential equation $x''=\sigma(u,x,x')$.

Conversely, suppose that $G(u,x;a,b)=0$ is (locally) a complete integral of the ODE $x''=\sigma(u,x,x')$.  Let $\mathbb{R}^4$ be the space with variables $(u,x,a,b)$.  This is equipped with two projections onto $\mathbb{R}^2$: $\pi_1:(u,x,a,b)\mapsto (u,x)$ and $\pi_2:(u,x,a,b)\mapsto (a,b)$.  Let $X$ be the hypersurface $G(u,x;a,b)=0$ in $\mathbb{R}^4$.  Then $D'=\ker d\pi_1$ and $D=\ker d\pi_2$ define two distributions on $X$ that satisfy the conditions for a real CR structure.  The quotient by one distribution maps into the space of dynamical variables $(u,x)$, and the other maps into the space of solutions $(a,b)$ of the ODE.  Hence $X$ is locally isomorphic to the real CR manifold of the previous paragraph.

By interchanging the roles of the two sets of variables $(u,x)$ and $(a,b)$, instead regarding $(a,b)$ as the dynamical variables, there results another second order differential equation.  The point equivalence class of this second order differential equation depends only on the point equivalence class of the original second order equation, and is called the {\em dual equation}.  

\section{Burgers' functions}\label{BurgersFunctions}
The projective plane $\mathbb{P}^2$ is the space of $1$-dimensional flags $V_1\subset\mathbb{R}^3$, and the dual projective denoted by $(\mathbb{P}^2)^*$ is the space of lines in $\mathbb{P}^2$ or, equivalently, the space of $2$-dimensional flags $V_2\subset\mathbb{R}^3$.  These are both homogeneous spaces for the general linear group $\op{GL}(3,\mathbb{R})$ and can be obtained as a quotient of $\op{GL}(3,\mathbb{R})$ by an appropriate parabolic (upper-triangular) subgroup.  We employ the notation of \cite{BastonEastwood} in labelling the parabolic subgroups using Dynkin diagrams with nodes crossed unless the negative of the corresponding positive root participates in the parabolic subgroup.  In particular, $P(\Axx)$ is the Borel subgroup and $\op{GL}(3,\mathbb{R})=P(\Aoo)$.  

The correspondence space between $\mathbb{P}^2$ consists of all lines in $\mathbb{P}^2$ together with a point on that line.  Equivalently, this is the space of flags $V_1\subset V_2\subset\mathbb{R}^3$ with $\dim V_i=i$.  This is the quotient of $\op{GL}(3,\mathbb{R})$ by the Borel group.  The lattice of parabolic homogeneous spaces of $\op{GL}(3)$, labelled according to their Dynkin type, is:
$$
%\xymatrix@!{
\xymatrix{
&\underset{\Axx}{\op{GL}(3,\mathbb{R})/P(\Axx)}\ar[dl]_{\pi_1:(V_1,V_2)\mapsto V_1}\ar[dr]^{\pi_2:(V_1,V_2)\mapsto V_2}&\\
\underset{\Aox}{\mathbb{P}^2}\ar[dr] && \underset{\Axo}{(\mathbb{P}^2)^*}\ar[dl]\\
&\underset{\Aoo}{\{\star\}}&
}
$$
The mappings $\pi_1$ and $\pi_2$ define a pair of foliations of the $3$-dimensional space $\op{GL}(3,\mathbb{R})/P(\Axx)$ by curves.  The tangents to these curves $\ker d\pi_1=D$ and $\ker d\pi_2=D'$ define a real CR structure.  This CR structure is non-degenerate.  In fact, $\op{GL}(3,\mathbb{R})/P(\Axx)$ can be naturally identified with the projective cotangent bundle of $\mathbb{P}^2$, and $(\mathbb{P}^2)^*$ is the quotient by the geodesic flow.

\begin{definition}\label{DefBurgersFunction}
Let $U\subset \mathbb{P}^2$ be an open set.  A {\em Burgers' function} on $U$ is a map $f:U\to (\mathbb{RP}^2)^*$ such that the the preimage of any line $\ell$ in the range of $f$ is $\ell\cap U$.
\end{definition}

To describe a $C^1$ Burgers' function explicitly, introduce rectangular coordinates $(u,x)$ in an affine patch of $\mathbb{RP}^2$ centered near a point of $U$.  Coordinatize the open subset of $\mathbb{L}$ that excludes lines parallel to the $x$-axis with pairs $(X,L)$ where $X$ is the point of intersection of a line with the $x$ axis and $L$ is its slope.  A Burgers' function $(X(x,u),L(x,u))$ must then satisfy the equations
\begin{align*}
L(x+uL_0(x), u) &= L_0(x)\\
X(x+uL_0(x), u) &= x
\end{align*}
Differentiating the first of these equations with respect to $u$ gives Burgers' equation
\begin{equation}\label{burgers1}
L_u + LL_x = 0,\qquad L(x,0)=L_0(x).
\end{equation}
Differentiating the second gives the transport equation
$$X_u + LX_x=0,\qquad X(x,0)=x.$$

Suppose now that a $C^1$ solution to equation \eqref{burgers1} is given.  Then this solution must be constant on the line $u\mapsto (x+uL_0(x),u)$, and therefore is a Burgers' function.  Since the transport equation is uniquely solvable with $C^1$ solution and involves no freedom in the choice of initial conditions, it follows that Burgers' functions correspond to solutions of Burgers' equation:

\begin{lemma}
Every $C^1$ Burgers' function $f:U\to(\mathbb{RP}^2)^*$ is a solution of Burgers' equation in a local rectangular coordinate chart.  Conversely, any $C^1$ function that solves Burgers' equation locally in a rectangular coordinate chart about every point of $U$ is a Burgers' function.
\end{lemma}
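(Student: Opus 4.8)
The statement is a biconditional, and the plan is to show that the geometric condition in Definition~\ref{DefBurgersFunction} is equivalent, in a rectangular chart, to the pair of functional equations $L(x+uL_0(x),u)=L_0(x)$ and $X(x+uL_0(x),u)=x$ displayed above, where $L_0(x)=L(x,0)$, and then to pass between these functional equations and the differential equation \eqref{burgers1}: by differentiation in one direction and by the method of characteristics in the other.

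For the forward direction I would first unpack the defining property $f^{-1}(\ell)=\ell\cap U$ into two geometric facts. Taking $\ell=f(p)$ gives $p\in f^{-1}(f(p))=f(p)\cap U$, so every point lies on its own image line (incidence); and if $q\in\ell\cap U$ with $\ell=f(p)$ in the range, then $f(q)=\ell$, so $f$ is constant along each image line (constancy). Writing $f=(X,L)$ in the $(X,L)$ coordinates, incidence at the point $(x,0)$ says the line $f(x,0)$ meets the $x$-axis $u=0$ at $x$ with slope $L_0(x)$, hence is $\{\,\xi=x+L_0(x)u\,\}$; constancy along this line at the point $(x+L_0(x)u,u)$ then gives exactly the two functional equations. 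Differentiating the first in $u$ at fixed $x$ produces $L_x\cdot L_0(x)+L_u=0$ at the evaluation point, and substituting $L_0(x)=L(x+uL_0(x),u)$ turns this into $L_u+LL_x=0$. As $(x,u)$ varies these evaluation points sweep a full neighborhood (by the local diffeomorphism discussed below), so \eqref{burgers1} holds identically; the $C^1$ hypothesis is what legitimizes the differentiation, and finiteness of $L$ near the base point keeps us inside the $(X,L)$ chart.

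For the converse, suppose $L$ is a $C^1$ solution of \eqref{burgers1} near a point. I would run the method of characteristics: the characteristic through $(x_0,0)$ is the curve $\chi(u)$ solving $\dot\chi=L(\chi,u)$, $\chi(0)=x_0$, which has a unique $C^1$ solution since $L$ is locally Lipschitz. Along it $\tfrac{d}{du}L(\chi(u),u)=L_u+LL_x=0$, so $L$ is constant and equal to $L_0(x_0)$; hence $\dot\chi$ is constant and the characteristic is the straight line $\chi(u)=x_0+L_0(x_0)u$, recovering the first functional equation. I would then define $X$ as the unique $C^1$ solution of the transport equation $X_u+LX_x=0$ with $X(x,0)=x$; the same computation gives $\tfrac{d}{du}X(\chi(u),u)=0$, so $X\equiv x_0$ along the characteristic, recovering the second functional equation. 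With $f=(X,L)$ so defined, the point $(x_0+L_0(x_0)u,u)$ maps to the line of intercept $x_0$ and slope $L_0(x_0)$, which is incident to that point and common to all points of the characteristic. Incidence gives $f^{-1}(\ell)\subseteq\ell$ and constancy gives $\ell\cap U\subseteq f^{-1}(\ell)$, so $f^{-1}(\ell)=\ell\cap U$ for each $\ell$ in the range, i.e.\ $f$ is a Burgers' function.

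The one place requiring care, and the main obstacle, is ensuring the construction in the converse is well defined pointwise: near the base point every point of $U$ must lie on exactly one characteristic issuing from $u=0$. This is precisely the local absence of shocks, and I would settle it with the inverse function theorem. The characteristic map $(x_0,u)\mapsto(x_0+L_0(x_0)u,u)$ is $C^1$ with Jacobian determinant $1+L_0'(x_0)u$, which equals $1$ on $u=0$ and is therefore nonzero on a neighborhood, making the map a local $C^1$ diffeomorphism. This guarantees that $X$ and $L$ are single-valued $C^1$ functions of $(x,u)$, that the transport equation is uniquely solvable with no freedom in the initial data, and that the evaluation points used in the forward direction fill a neighborhood, closing both directions.
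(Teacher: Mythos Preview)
Your proposal is correct and follows essentially the same route as the paper: derive the functional equations $L(x+uL_0(x),u)=L_0(x)$ and $X(x+uL_0(x),u)=x$ from the geometric definition, differentiate to obtain Burgers' and transport equations, and for the converse run the method of characteristics and recover $X$ from the transport equation with forced initial data. Your version is more explicit than the paper's terse argument---in particular, your unpacking of the definition into incidence and constancy, and your use of the inverse function theorem on the characteristic map $(x_0,u)\mapsto(x_0+L_0(x_0)u,u)$ to guarantee single-valuedness near $u=0$, fill in details the paper leaves implicit.
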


\subsection{Cauchy problem}
Let $I=[0,1]$.  The Cauchy problem for a Burgers' function is to prescribe values on a $C^1$ embedded curve $\gamma: I \to \mathbb{P}^2$.  That is, for a function $L_\gamma : I \to (\mathbb{P}^2)^*$ to find an open subset $U$ of $\mathbb{P}^2$ containing $\gamma(I)$ and a Burgers' function $L:U\to(\mathbb{P}^2)^*$ such that $L_\gamma=L\circ\gamma$.  Define the tangent map $T_\gamma:I\to (\mathbb{P}^2)^*$ to be the function that associates to $\gamma$ its tangent line at each point.  

\begin{lemma}\label{BurgersCauchy}
A sufficient condition for a local solution of the Cauchy problem is that $T_\gamma(x)\not=L_\gamma(x)$ for all $x\in I$.
\end{lemma}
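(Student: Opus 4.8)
The plan is to solve the Cauchy problem by the method of characteristics, recognizing the transversality hypothesis as precisely the non-characteristic condition for Burgers' equation. First I would record that the Cauchy data is constrained: since a Burgers' function $f$ satisfies $p\in f(p)$ for every $p$ (the preimage of the line $f(p)$ equals $f(p)\cap U$ and contains $p$), any admissible $L_\gamma$ must assign to $\gamma(s)$ a projective line \emph{through} $\gamma(s)$. Working in a local rectangular chart $(u,x)$ on $\mathbb{P}^2$ in which the relevant lines are non-vertical (always arrangeable near a given point), I would write $\gamma(s)=(u(s),x(s))$ and let $\ell(s)$ denote the slope of $L_\gamma(s)$. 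By the preceding Lemma, producing a Burgers' function realizing the data is equivalent to producing a $C^1$ solution of $L_u+LL_x=0$ with $L(u(s),x(s))=\ell(s)$.

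Next I would flow the data out along characteristics. The characteristics of $L_u+LL_x=0$ are the straight lines on which $L$ is constant and equal to the slope; through $\gamma(s)$ that characteristic is exactly the prescribed line $L_\gamma(s)$. Define the flow-out
\[
\Phi(s,t)=\bigl(u(s)+t,\ x(s)+\ell(s)\,t\bigr),\qquad \mathcal{L}(s,t)=\ell(s).
\]
Its Jacobian along the initial curve is
\[
\det\begin{pmatrix} u'(s) & 1\\ x'(s) & \ell(s)\end{pmatrix}=u'(s)\,\ell(s)-x'(s),
\]
which vanishes precisely when the slope $\ell(s)$ of $L_\gamma(s)$ equals the slope $x'(s)/u'(s)$ of the tangent line $T_\gamma(s)$; and when the tangent is vertical, $u'(s)=0$, the determinant is $-x'(s)\neq 0$ since $\gamma$ is an embedding, which again matches $T_\gamma(s)\neq L_\gamma(s)$. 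Thus the hypothesis $T_\gamma(s)\neq L_\gamma(s)$ is exactly the statement that $\Phi$ is a local diffeomorphism along $t=0$.

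I would then upgrade this pointwise non-degeneracy to a genuine neighborhood of the compact arc $\gamma(I)$. By the inverse function theorem $\Phi$ is a local diffeomorphism on a strip $I\times(-\epsilon,\epsilon)$; since $\gamma$ is an embedding and $I$ is compact, a standard tubular-neighborhood argument shrinks $\epsilon$ so that $\Phi$ is injective there, hence a diffeomorphism onto an open set $U\supset\gamma(I)$. Setting $L=\mathcal{L}\circ\Phi^{-1}$ produces a $C^1$ function (given that $L_\gamma$, and hence $\ell$, is $C^1$), constant along each line $t\mapsto\Phi(s,t)$ with value equal to that line's slope; differentiating this property reproduces $L_u+LL_x=0$, so by the Lemma $L$ is a Burgers' function on $U$. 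Finally $L(\gamma(s))=\ell(s)$, and the line of slope $\ell(s)$ through $\gamma(s)$ is $L_\gamma(s)$, so $L\circ\gamma=L_\gamma$ as required.

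The main obstacle is not the local solvability, which is the classical characteristic method for a quasilinear first-order equation, but rather the clean identification of the geometric transversality condition $T_\gamma\neq L_\gamma$ with the non-vanishing of the characteristic Jacobian, together with the uniform-in-$s$ injectivity of the flow-out over the entire compact interval $I$. It is this uniform injectivity that promotes the family of local solutions to a single Burgers' function defined on one neighborhood of $\gamma(I)$.
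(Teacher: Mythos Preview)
Your argument is correct and is, at bottom, the same idea as the paper's: flow the Cauchy curve out along the prescribed lines and observe that the hypothesis $T_\gamma\neq L_\gamma$ is exactly the rank condition making this flow-out a local diffeomorphism onto $\mathbb{P}^2$. The difference is entirely one of packaging. You work in affine coordinates, write down the characteristic flow $\Phi(s,t)$ for the PDE $L_u+LL_x=0$, and compute the Jacobian explicitly; the paper instead works intrinsically in the incidence quadric $Q=\{(x,y):\langle x,y\rangle=0\}\subset\mathbb{P}^2\times(\mathbb{P}^2)^*$, observes that the graph of any Burgers' function is a surface in $Q$ ruled by the fibers of $Q\to(\mathbb{P}^2)^*$, and notes that $T_\gamma\neq L_\gamma$ is equivalent to the lifted Cauchy curve $\Gamma=(\gamma,L_\gamma)$ being transverse to those fibers, so the rank theorem gives the result in one line. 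Your version has the virtue of making the non-characteristic condition and the compactness/tubular-neighborhood step completely explicit; the paper's version is coordinate-free and makes clear that the construction is really about the geometry of $Q$ rather than about a particular PDE.
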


\begin{proof}
Consider the graph of the solution in $\mathbb{P}^2\times(\mathbb{P}^2)^*$.  Let $\Gamma(x) = (\gamma(x),L_\gamma(x))$ for $x\in I$.  The graph of a solution $L$ is a surface lying in the quadric hypersurface $Q=\{(x,y)\in\mathbb{P}^2\times (\mathbb{P}^2)^*\mid \langle x,y\rangle =0\}$ and contains the curve $\Gamma(I)$.  The condition that  $T_\gamma(x)\not=L_\gamma(x)$ is equivalent to $\Gamma$ being transverse to the fibers of $Q\to (\mathbb{P}^2)^*$, which gives sufficiency as a consequence of the rank theorem.
\end{proof}

We note for later use that the only (globally defined) Burgers' function on an affine patch $\mathbb{R}^2\subset\mathbb{RP}^2$ must be constant, since two lines of unequal slopes must intersect.  Burgers' functions do not allow the possibility of shocks that are traditionally associated with discontinuous weak solutions of Burgers' equation.

The definition of Burgers' functions can be generalized to allow multiple-valued functions.  The precise generalization depends on what kinds of singularities are allowed.  The following is one such generalization so that the Burgers' function is a multiple-valued smooth function locally almost everywhere:

\begin{definition}
A {\em Burgers' surface} is a two-dimensional immersed submanifold $i:B\to\op{GL}(3)/P(\Axx)$, with $i$ a smooth immersion, such that $\op{rank}\pi_2\circ i=1$.
\end{definition}

As a consequence of Sard's theorem together with Fubini's theorem, $\op{rank}\pi_1\circ i=2$ almost everywhere on $B$.  In a neighborhood $V$ of any point of $B$ where $\op{rank}\pi_1\circ i=2$, there is locally a Burgers' function $f:\pi_1(V)\to \mathbb{RP}^2$ such that $V\subset \pi_2^{-1}f\pi_1(V)$.  The definition of a Burgers' surface allows Burgers' functions that are ramified over a set of points in $\mathbb{RP}^2$, as illustrated in Example \ref{example}.

A Burgers' surface is called {\em maximal} if it contains each of the fibers over $\pi_2$: $i(B)=\pi_2^{-2}\pi_2i(B)$.  Any Burgers' surface is contained in a maximal Burgers' surface.  A connected maximal Burgers' surface is a cylinder (with $S^1$ fibers) over an immersed curve  $\gamma:I\to(\mathbb{P}^2)^*$.  This immersed curve is equipped with a canonical lift, the {\em caustic curve} $\overline{\gamma}:I\to \op{GL}(3)/P(\Axx)$ defined by associating at each point $x\in I$ the pair $(x,L_\gamma(x))$ where $L_\gamma(x)$ denotes the line tangent to $\gamma$ at $x$.  The caustic curve is the set of points where $i:TB\to D\oplus D'$ maps onto the contact distribution.  In particular, it is a Legendrian curve.  

\subsection{Example}\label{example}
Let $(x,y,z)\in\mathbb{R}^3$ be projective coordinates on $\mathbb{P}^2$ and $(p,q,r)$ dual coordinates on $(\mathbb{P}^2)^*$.  The space $\op{GL}(3)/P(\Axx)$ can be identified with the hypersurface $xp+yq+zr=0$ in $\mathbb{P}^2\times(\mathbb{P}^2)^*$.

Let $Q$ be the circle in $\mathbb{P}^2$ given by the equation $x^2 + y^2 = z^2$.  Consider the Burgers' surface whose fibers over each point outside $Q$ are the pair of tangent lines to the circle through $Q$ and whose fiber over each point of $Q$ is the tangent line through the point.  Opposite tangent lines to the circle intersect in the line at infinity, so the two sheets of the Burgers' surface ramify at infinity.

The caustic curve is the locus of the equations
$$x^2+y^2=z^2,\qquad xp+yq+zr=0,\qquad p^2+q^2=r^2.$$

Examples of Cauchy curves include curves of the form $x^2+y^2=az^2$ for $a>1$, since these are everywhere transverse to the tangent lines of the circe.

\subsection{Generalized Burgers' surfaces}
The general setup is as follows.  Let $(X,D,D')$ be a three-dimensional real CR manifold and $C$ a connected smoothly embedded Legendrian curve in $X$ that is transverse to $D'$.  A maximal connected integral manifold $B$ of $D$ that contains $C$ is called a {\em generalized Burgers' surface}.  In this setting the curve $C$ is the associated Burgers' curve.  Working locally, $X$ is isomorphic to the projective cotangent bundle of a surface $A$ with $D'$ the vertical distribution of $\pi:X\to A$.  In a neighborhood $U$ of a generic point of $B$, $\pi|_U : U\to A$ is a local diffeomorphism.  Let $V=\pi(U)$.  Then there is a section $L : V\to B$ of $\pi$. At each point $x\in \pi(U)$, $L(x)$ defines a geodesic through $x$ such that whenever $x'$ is another point of that geodesic in $V$, $L(x)=L(x')$. Thus the definition of a generalized Burgers' surface generalizes that of a Burgers' function in Definition \ref{DefBurgersFunction}.

As in Section \ref{ODES}, represent $A$ as an open subset of the real $ux$-plane and $D$ by the vector field
$$D = \frac{\partial}{\partial u} + p\frac{\partial}{\partial x} + \sigma \frac{\partial}{\partial p}.$$
In these coordinates, the section $L$ giving the generalized Burgers' surface is described by a value of $p$ at each point of $A$.  Moreover, $L$ satisfies the inhomogeneous Burgers' equation
$$L_u + LL_x = \sigma(u,x,L).$$

Note that interchanging the roles of $D$ and $D'$ in the definition of a generalized Burgers' surface gives a different surface $B'$ such that $B\cap B'$ contains the Legendrian curve $C$.  Both of these surfaces are determined by $C$, and they also determine $C$.  These two Burgers' surfaces are connected by the same duality that connects their associated characteristic curves.  In particular, the function $\sigma$ associated to the dual Burgers' surface is the function corresponding to the dual second order differential equation.  The Burgers' surface and its dual each give rise to a caustic, the pair of which defines a Legendrian link in the contact manifold $X$.

\section{Klein space}
Let $\mathbb{T}$ be a fixed four-dimensional real vector space.  Define the following flag manifolds:
\begin{itemize}
\item Projective twistor space: $\mathbb{PT}=\{V_1\subset\mathbb{T}\mid \dim V_1=1\}$
\item Dual projective twistor space: $\mathbb{PT}^*=\{V_3\subset\mathbb{T}\mid \dim V_3=3\}$
\item Projective spin bundle: $\mathbb{PS}=\{V_1\subset V_2\subset \mathbb{T}\mid \dim V_1=1,\ \dim V_2=2\}$
\item Projective null twistor space: $\mathbb{PN}=\{(V_1,V_3)\mid V_1\subset V_3\subset\mathbb{T},\ \ \dim V_1=1,\ \dim V_3=3\}$
\item Primed projective spin bundle: $\mathbb{PS}'=\{V_2\subset V_3\subset \mathbb{T}\mid \dim V_2=2,\ \dim V_3=3\}$
\item Klein space: $\mathbb{K}=\{V_2\subset\mathbb{T}\mid \dim V_2=2\}$
\item Variety of complete flags: $\op{GL}(\mathbb{T})/B=\{(V_1,V_2,V_3)\mid V_1\subset V_2\subset V_3\subset\mathbb{T},\ \ \dim V_1=1,\ \dim V_2=2,\ \dim V_3=3\}$
\end{itemize}

The flag manifolds are obtained as quotients from each other in the following diagram, showing the Dynkin type of each flag manifold
\begin{equation}\label{bigpicture}
\xymatrix{
& \underset{\Axxx}{\op{GL}(\mathbb{T})/B} \ar[dl]_{(V_1,V_2,V_3)\mapsto (V_2,V_3)}\ar[rr]^{(V_1,V_2,V_2)\mapsto (V_1,V_2)}\ar'[d]^{(V_1,V_2,V_3)\mapsto (V_1,V_3)}[dd] & & \underset{\Axxo}{\mathbb{PS}'} \ar[dd]^{(V_1,V_2)\mapsto V_1}\ar[dl]^{(V_1,V_2)\mapsto V_2}\\
\underset{\Aoxx}{\mathbb{PS}} \ar[rr]_{(V_2,V_3)\mapsto V_2}\ar[dd]_{(V_2,V_3)\mapsto V_3} & & \underset{\Aoxo}{\mathbb{K}}\ar[dd] &\\
& \underset{\Axox}{\mathbb{PN}} \ar'[r]_{(V_1,V_3)\mapsto V_1}[rr]\ar[dl]^{(V_1,V_3)\mapsto V_3} & & \underset{\Axoo}{\mathbb{PT}}\ar[dl]
\\
\underset{\Aoox}{\mathbb{PT}^*} \ar[rr] & & \underset{\Aooo}{\{\star\}}  &
}
\end{equation}

The space $\mathbb{K}$ is isomorphic as a $\op{GL}(\mathbb{T})$ manifold to the Klein quadric in $\mathbb{P}(\wedge^2\mathbb{T})$.  This identification associates to the $2$-plane $V_2\subset\mathbb{T}$ (defining a point of $\mathbb{K}$) the simple $2$-vector obtained by wedging together a pair of generators of $V_2$.  The Klein quadric is the locus of $X\wedge X=0$.  This carries a conformal structure obtained by declaring two points $X,Y\in\mathbb{K}$ to be null-related if and only if $X\wedge Y=0$ in $\mathbb{P}(\wedge^2\mathbb{T})$.

Since $\mathbb{K}$ is an indefinite $(3,3)$ quadric, it is ruled by two distinct families of completely isotropic $2$-planes.  Distinguish one family of planes as $\alpha$-planes and the other as $\beta$-planes.  The space $\mathbb{PT}$ is identified with the space of $\alpha$-planes and $\mathbb{PT}^*$ is identified with the space of $\beta$-planes.  An $\alpha$-plane $A$ and $\beta$-plane $B$ intersect (are {\em incident}) if and only if the one-dimensional subspace of $\mathbb{T}$ corresponding to $A\in\mathbb{PT}$ is contained in the three-dimensional subspace of $\mathbb{T}$ corresponding to $B\in\mathbb{PT}^*$.  Thus $\mathbb{PN}$ is precisely the set of $\alpha$-planes and $\beta$-planes that intersect.  Their intersection is always a null geodesic of $\mathbb{K}$.

This construction identifies $\mathbb{PN}$ naturally with the subset of $\mathbb{PT}\times\mathbb{PT}^*$ consisiting of $A\in\mathbb{PT}$ and $B\in\mathbb{PT}^*$ that are incident.  Each of the projections $\pi_1:\mathbb{PN}\to\mathbb{PT}$ and $\pi_3:\mathbb{PN}\to\mathbb{PT}^*$ have rank three, with $\mathbb{P}^2$ fibers.

\subsection{Structure at infinity}\label{StructureAtInfinity}
Fix a point $I\in\mathbb{K}$.  Let $\scri\subset\mathbb{K}$ (or ``scri'') be the null cone through $I$, minus the vertex.  Let $\mathbb{P}\scri\subset\mathbb{PN}$ be the space of all null geodesics through $I$.  In terms of flags,
$$\mathbb{P}\scri = \{ (V_1,V_3)\mid V_1\subset I\subset V_3\}.$$
Note that $\mathbb{P}\scri$ is fibered over the image of $I$ in $\mathbb{PT}$ and in $\mathbb{PT}^*$, so $\mathbb{P}\scri$ is naturally a product $\mathbb{P}^1\times\mathbb{P}^1$, and this product structure in preserved under the stabilizer of $I$ in $\op{GL}(\mathbb{T})$.  Under the double fibration of $\op{GL}(\mathbb{T})/B$ \eqref{bigpicture}, $\scri$ is precisely the set of points in $\mathbb{K}$ incident with $\mathbb{P}\scri$. Scri is a trivial real line bundle over $\mathbb{P}\scri=\mathbb{P}^1\times\mathbb{P}^1$.  Introduce projective coordinates $(\pi',\pi)$ on $\mathbb{P}^1\times\mathbb{P}^1$.  An affine parameter $u$ on the line bundle $\scri\to\mathbb{P}\scri$ then gives a coordinate system $(u,\pi',\pi)$ on $\scri$.  The preimage of a $\pi'=\pi'_0$ circle on $\mathbb{P}\scri$ is an $\alpha$-plane lying on $\scri$ and the preimage of a circle $\pi=\pi_0$ is a $\beta$-plane lying on $\scri$.  Every $\alpha$ and $\beta$ plane of $\scri$ is obtained in this manner.

Generically, a null geodesic in $\mathbb{K}$ intersects $\scri$ at a unique point.  This induces a fibration $\mathbb{PN}\dashrightarrow\scri$ in the category of rational maps.  More specifically, let $U$ be the set of flags $(V_1,V_3)\in\mathbb{PN}$ such that $V_1\not\subset I$ and $I\not\subset V_3$.  Then there exists a unique $J\in\scri$ such that $V_1\subset J\subset V_3$.  This $J$ defines a fibration $U\to\scri$ with $\mathbb{P}^1\times\mathbb{P}^1$ fibers.

\section{Shearfree congruences}
A smoothly parametrized family of null geodesics that foliate an open subset of $\mathbb{K}$ in called a {\em null geodesic congruence}.  Fix a representative metric for the conformal structure on $\mathbb{K}$.  Let $k$ be a nonvanishing affinely parametrized vector field that is everywhere tangent to a null geodesic congruence.  Define the {\em umbral bundle} to be the bundle $E=k^\perp/k$ consisting of all vectors that are perpendicular to $k$ modulo $k$ itself.  The metric on $\mathbb{K}$ descends to a metric of signature $(1,1)$ on $E$.  Since $k$ is null and affinely parametrized, the linear transformation $X\mapsto \nabla_X k$ descends to a linear transformation on $(\nabla k)_E:E\to E$.  The {\em shear tensor} of the congruence is the tracefree symmetric part of $(\nabla k)_E$.  If the shear tensor vanshes, then the congruence is said to be {\em shearfree}.  The property of being shearfree is invariant under conformal changes to the metric.

\begin{lemma}
Let $m$ and $m'$ be two (local) null sections of $E$ that are linearly independent.  The shear vanishes if and only if $g_E(\nabla_m k,m) = g_E(\nabla_{m'}k,m')=0$.  Equivalently, the shear vanishes if and only if the distributions spanned by $k,m$ and by $k,m'$ are integrable in the sense of Frobenius.
\end{lemma}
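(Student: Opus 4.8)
The plan is to work with the bilinear form $\Theta(X,Y) = g_E(\nabla_X k, Y)$ on the umbral bundle $E$, whose trace-free symmetric part is by definition the shear. Since $E$ has signature $(1,1)$, it contains exactly two null directions; two linearly independent null sections $m,m'$ cannot lie on the same line (a totally isotropic plane in a $(1,1)$ space is at most one-dimensional), so they span these two distinct directions and satisfy $g_E(m,m')\neq 0$. After rescaling I may assume $g_E(m,m')=1$, so in the basis $\{m,m'\}$ the metric is the anti-diagonal matrix and $g_E(m,m)=g_E(m',m')=0$. First I would record that a trace-free symmetric $2$-tensor in such a basis is forced to have vanishing off-diagonal entry, since its trace is $g_E^{ab}\Theta_{ab}=\Theta_{12}+\Theta_{21}$, so trace-freeness kills precisely the $(1,2)$ component. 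The two diagonal entries are untouched by symmetrization (the antisymmetric twist part has zero diagonal) and by the trace correction (because $g_E(m,m)=g_E(m',m')=0$). Hence the shear is represented by the two scalars $\Theta(m,m)=g_E(\nabla_m k,m)$ and $\Theta(m',m')=g_E(\nabla_{m'}k,m')$, and it vanishes exactly when both do, giving the first equivalence.

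For the second equivalence I would lift $m$ and $m'$ to vector fields in $k^\perp$; the ambiguity in each lift is a multiple of $k$, which alters $[k,m]$ only by a multiple of $k$ and so does not affect membership in $\op{span}(k,m)$. The key geometric observation is that $\Pi:=\op{span}(k,m)$ is a totally isotropic $2$-plane, since $k$ and $m$ are null and mutually orthogonal; in signature $(2,2)$ such a plane is maximal isotropic and therefore self-orthogonal, $\Pi^\perp=\Pi$. Consequently $[k,m]\in\Pi$ if and only if $[k,m]$ is orthogonal to both $k$ and $m$, which replaces the Frobenius bracket condition by two scalar equations.

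I would then evaluate these two inner products through the torsion-free identity $[k,m]=\nabla_k m-\nabla_m k$. Pairing with $k$: the term $g(\nabla_m k,k)=\tfrac12\,m\,g(k,k)$ vanishes since $k$ is null, and $g(\nabla_k m,k)=-g(m,\nabla_k k)=0$ since $k$ is an affinely parametrized geodesic, so $g([k,m],k)=0$ automatically. Pairing with $m$: the term $g(\nabla_k m,m)=\tfrac12\,k\,g(m,m)$ vanishes since $m$ is null, leaving $g([k,m],m)=-g(\nabla_m k,m)=-g_E(\nabla_m k,m)$, the last step using $\nabla_m k\in k^\perp$ so that the induced and ambient pairings agree. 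Thus $\op{span}(k,m)$ is Frobenius-integrable precisely when $g_E(\nabla_m k,m)=0$, and symmetrically for $m'$; combined with the first part this yields the stated equivalence.

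The main obstacle is recognizing and justifying that the null $2$-planes $\op{span}(k,m)$ are self-orthogonal, as this is exactly what collapses the bracket condition to the pair of scalar equations produced in the first part; the remaining steps are routine uses of metric compatibility, torsion-freeness, and the null and geodesic hypotheses. A secondary point requiring care is the bookkeeping between the induced metric $g_E$ on $E=k^\perp/k$ and the ambient representative metric $g$, together with verifying that all conclusions are independent of the choice of lift of $m$ and $m'$.
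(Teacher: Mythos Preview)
The paper states this lemma without proof, so there is nothing to compare against directly. Your argument is correct and is the standard one: in a null basis $\{m,m'\}$ for the $(1,1)$-bundle $E$ the metric is purely off-diagonal, so the trace correction kills exactly the off-diagonal symmetric component of $\Theta(X,Y)=g_E(\nabla_X k,Y)$ and the shear is carried entirely by the two diagonal entries $\Theta(m,m)$ and $\Theta(m',m')$. For the Frobenius part your use of the self-orthogonality $\Pi^\perp=\Pi$ of a totally null $2$-plane in signature $(2,2)$ is the key point, reducing $[k,m]\in\Pi$ to the two scalar conditions $g([k,m],k)=g([k,m],m)=0$; the first vanishes automatically from $k$ null and geodesic, and the second reproduces $g_E(\nabla_m k,m)$. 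Your checks on independence of lift and on the agreement of $g$ and $g_E$ are also in order. This is exactly the argument one would supply for the lemma as stated.
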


The quotient by the null geodesics in the congruence defines a submanifold of $\mathbb{PN}$.  The condition that it be shearfree is equivalent to $m$ and $m'$ descending under the quotient to elements of the distributions $\ker d\pi_1$ and $\ker d\pi_3$ on $\mathbb{PN}$.  A shearfree congruence is thus identified locally with a three-dimensional embedded submanifold $K$ of $\mathbb{PN}$ such that $\ker d\pi_1\cap TK$ and $\ker d\pi_3\cap TK$ are both one-dimensional distributions.  That is, $K$ is a $3$-dimensional real CR submanifold of $\mathbb{PN}$.  Conversely, such a submanifold gives rise naturally to a shearfree congruence in an open subset of $\mathbb{K}$.

Confining attention to congruences that are not tangent to $\scri$ at any point, the submanifold $K$ defines a local section over the fibration $\mathbb{PN}\dashrightarrow\scri$ denoted by $\kappa$.  Let $\kappa=(\kappa_1,\kappa_3)$ where $\kappa_1=\pi_1\kappa$ and $\kappa_3=\pi_3\kappa$ are the components of $\kappa$ under the projections $\pi_1:\mathbb{PN}\to\mathbb{PT}$ and $\pi_3:\mathbb{PN}\to\mathbb{PT}^*$.  The condition that $\kappa$ be a shearfree congruence is that $\kappa_1$ and $\kappa_3$ both have rank two everywhere.

\subsection{Shearfree congruences at $\scri$}
As in Section \ref{StructureAtInfinity}, the projective coordinates $\pi$ and $\pi'$ on $\scri$ label, respectively, the $\beta$ and $\alpha$ planes on $\scri$, so that for a fixed $\pi_0$ and $\pi'_0$, $\pi^{-1}(\pi_0)$ and $(\pi')^{-1}(\pi'_0)$ are a $\beta$ and an $\alpha$ plane, respectively.  Fix a $\beta$-plane, $B=\pi^{-1}(\pi_0)\subset\scri$.  Then the restriction of $\kappa_1$ to $B$ associates to each point $x\in B$ an $\alpha$-plane incident with $x$. The $\alpha$-plane $\kappa_1(x)$ intersects $B$ in a straight line through $x$ (which is a null geodesic, since $B$ is isotropic).  

For any $x\in B$, the lines in $\mathbb{PT}$ that join $\kappa_1(x)$ to points of $I$ determine a straight line on $\scri$ as well.  The fiber of $\kappa_1$ over $\kappa_1(x)$ must be contained in this line.  The fiber cannot degenerate to a point since $\kappa_1$ has rank two.

These two lines determined by $\kappa_1(x)$ are the same line.  Indeed, they both lie on the plane containing $\kappa_1(x)$ and $I$, which is dual to the element of $\mathbb{PT}^*$ that defines the $\beta$-plane $B$.  Moreover, they both lie on the $\alpha$-plane containing $\kappa_1(x)$.  But these conditions determine a unique $\mathbb{P}^1$.  Therefore $\kappa_1|_B$ is a Burgers' function.

In the global case, the fiber of each $\kappa_i$ is an open subset of a line on $\scri$ (a $\mathbb{P}^1$), and so by completeness must be the entire line.  Since the fibers are compact, the quotient of $\scri$ by the fibers is a smooth surface, and so each $\kappa_i$ factors through the quotient with an embedding into $\mathbb{PT}$ and $\mathbb{PT}^*$, respectively.

\subsection{Cauchy problem}
Let $S$ be a section of the line bundle $\scri\to\mathbb{P}^1\times\mathbb{P}^1$.  Let $\kappa_S:S\to\mathbb{PN}$ be a smooth section of the fiber product $S\times_{\scri}\mathbb{PN}\to S$, and assume that $\kappa_S$ is transverse to both of the fibrations $\mathbb{PN}\to\mathbb{PT}$ and $\mathbb{PN}\to\mathbb{PT}^*$.  The Cauchy problem is to find a shearfree congruence that agrees with $\kappa_S$ when restricted to $S$.  

The problem of local existence is twofold:
\begin{itemize}
\item Find an open neighborhood $V$ of $S$ on $\scri$ and a section $\kappa:V\to \mathbb{PN}$ such that $\kappa|_S=\kappa_S$ and $\kappa_1,\kappa_2$ are rank two.
\item Find an open neightborhood $U$ of $S$ in $\mathbb{K}$ such that there is a shearfree congruence in $U$ that induces the data $\kappa$ at the boundary.
\end{itemize}

The first of these is assured by the local solvability of Burgers' equation (Lemma \ref{BurgersCauchy}) for smooth initial data and compactness of the domain.  

The second is then the problem of finding a foliation in a sufficiently small open set $U$ by vectors $k$ subject to an initial condition on the set $V$.  There is a neighborhood $U_x$ of any point $x\in S$ in which there is a shearfree congruence of the required kind, by the implicit function theorem.  Since $S$ is compact, it is covered by a finite number of such sets.  Passing to a refinement if necessary then ensures that the foliation in each $U_x$ extends to a foliation of their union.  This proves Theorem \ref{localexistence}.

\section{Curved case}
Now let $M$ be a manifold with a $(2,2)$ conformal structure.  The conformal structure is specified by following data of a Cartan connection (see \cite{Sharpe}):
\begin{itemize}
\item A principal $H$-bundle $\mathscr{G}\to M$, with $H=P(\Aoxo)$, and right action $R:H\to \op{Diff}(\mathscr{G})$.
\item A connection form $\omega:T\mathscr{G}\to \mathfrak{gl}(\mathbb{T})$ that is an isomorphism of each tangent space equivariantly interwining the right action with the adjoint action of $H$, and such that $\omega(R_*(X))=X$ for all $X\in\mathfrak{h}$.
\end{itemize}
A curvature normalization on $\omega$ condition uniquely determines these data.  In the flat case, $\mathscr{G}$ is a principal homogeneous space for $\op{GL}(\mathbb{T})$ and $\omega$ is the associated Maurer--Cartan form.

The following quotient bundles corresponding to parabolic subgroups of $P(\Aoxo)$ play a role:
\begin{itemize}
\item $\mathscr{G}/P(\Aoxx)=\mathbb{P}\mathscr{S}$ is the projective spin bundle
\item $\mathscr{G}/P(\Axxo)=\mathbb{P}\mathscr{S}'$ is the projective primed spin bundle
\item $\mathscr{G}/P(\Axxx)=\mathbb{P}\mathscr{S}\times_M\mathbb{P}\mathscr{S}'=:\mathbb{P}\mathscr{N}$ is the bundle of null directions
\end{itemize}

The twistor distribution on $\mathscr{G}$ is the codimension $3$ distribution on $\mathscr{G}$ defined by $\mathbf{D}=\omega^{-1}(\mathfrak{p}(\Axoo))\subset T\mathscr{G}$.  The dual twistor distribution is the distribution $\mathbf{D}'=\omega^{-1}(\mathfrak{p}(\Aoox))$.  An $\alpha$-surface in $M$ is the projection of an integral surface of $\mathbf{D}$ and a $\beta$-surface is the projection of an integral surface of $\mathbf{D}'$.  In the flat case, these coincide with $\alpha$ and $\beta$ planes, respectively.  In general, however, the distributions $\mathbf{D}$ and $\mathbf{D}'$ are not integrable: a necessary and sufficient condition for integrability of each is the vanishing of a corresponding $\op{SO}(2,2)$ irreducible component of the Weyl tensor.  

Each of $\mathbf{D}$ and $\mathbf{D}'$ descend to $3$-dimensional distributions $D$ and $D'$ on $\mathbb{P}\mathscr{N}$, the bundle of null directions.  Their intersection $D\cap D'$  defines the null geodesic spray on this bundle.

%%Seems to be incorporated below already%%
%If $A$ is an $\alpha$ surface, let $\mathbb{P}\mathscr{S}'_A$ be the pullback of the projective primed spin bundle to $A$.  This is naturally identified with the project cotangent bundle of $A$.  Moreover, $D'$ descends to a Legendrian vector field on $\mathbb{P}\mathscr{S}'_A$.  Similarly, if $B$ is a $\beta$-surface then $\mathbb{P}\mathscr{S}_B$ is identified with the cotangent bundle of and $D$ descends to a Legendrian vector field on it.

The {\em shear} is encoded in the pair of integrability obstructions for $D$ and for $D'$, respectively.  Specifically, let $V$ be a nonzero representative of $\mathbf{D}\cap\mathbf{D}'$ and let $M$ and $M'$ complete $V$ to a basis of $D$ and of $D'$, respectively.   The shear is then given by the pair three-vectors $\Sigma = [V,M]\wedge V\wedge M, \Sigma' = [V,M']\wedge V\wedge M'$, which are well-defined up to an overall nonzero scale.

A congruence of null geodesics is a submanifold $K$ of the bundle $\mathbb{P}\mathscr{N}$ that is tangent to $D\cap D'$ and such that the projection $K\to M$ has rank $4$.  Then each of $D\cap TK$ and $D'\cap TK$ is a two-dimensional distribution on $K$.  The {\em shear} of the congruence is the integrability obstrubtion to each of these distributions.  Specifically, let $V$ be a nonzero representative of $\mathbf{D}\cap\mathbf{D}'$ and let $M$ and $M'$ complete $V$ to a basis of $D\cap TK$ and of $D'\cap TK$, respectively.   The shear is then given by the pair three-vectors $\Sigma = [V,M]\wedge V\wedge M, \Sigma' = [V,M']\wedge V\wedge M'$, which are well-defined up to an overall nonzero scale.  In particular, a congruence is {\em shearfree} if the shear vanishes identically on the section.

Let $i:N\xrightarrow{\subset} M$ be an embedded hypersurface.  Then $i^{-1}\mathbb{P}\mathscr{N}$, $i^{-1}\mathbb{P}\mathscr{S}$, and $i^{-1}\mathbb{P}\mathscr{S}'$ are, respectively, the pullback of the bundle of null directions, the pullback of the projective spin bundle, and the pullback of the projective spin bundle along the inclusion of $N$ in $M$. The distributions $D_N=D\cap Ti^{-1}\mathbb{P}\mathscr{N}$ and $D_N'=D'\cap Ti^{-1}\mathbf{P}\mathscr{N}$ each defines a distribution on $T\mathbb{P}\mathscr{N}_N$.  Each of these distributions is integrable.  Indeed, the distribution $D_N$ descends to a vector field on $i^{-1}\mathbb{P}\mathscr{S}$ and contains the vertical distribution for the fibration $i^{-1}\mathbb{P}\mathscr{N}\to\mathbb{P}\mathscr{S}$; similarly for $D'_N$.  So on the open subset of $\mathbb{P}\mathscr{N}_N$ obtained by deleting the null directions that are tangent to $N$, $D_N$ and $D'_N$ define a real CR structure.

The Levi form of this real CR structure is non-degenerate.  Indeed, let $E_{ij}$ be the matrix in $\mathfrak{gl}(\mathbb{T})$ with a $1$ in the $ij$ position, and zeros everywhere else.  Then the vector field $X=\omega^{-1}E_{43}$ lies in $D_N$ and is vertical.  Any vector field $Y\in D_N'$ has the form
$$Y\equiv a\omega^{-1}E_{31} + b\omega^{-1}E_{32} \pmod{\omega^{-1}\mathfrak{p}(\Aoxo)}.$$
If $Y$ is not vertical, then $a\not=0$, since $\omega^{-1}E_{32}$ generates $D_N\cap D_N'$ but we have deleted those directions.  By equivariance of the connection under Lie differentiation in the vertical direction $X$, $a$ is the $E_{41}$ component of $[X,Y]$, which is nonzero by assumption.

A congruence of null geodesics $K$ in a neighborhood of $N$ in $M$ is {\em asymptotically shearfree}\footnote{Although in the physics literature, often the implication is that the hypersurface $N$ be at infinity, we require no such restriction here but retain the terminology.} if the shear of $K$ vanishes identically on $N$.  An asymptotically shearfree congruence is determined by a section $\kappa : N \to \mathbb{P}\mathscr{N}_N$ onto a codimension $2$ submanifold $K$ such that $D_N\cap TK$ and $D'_N\cap TK$ both have rank one everywhere.  Thus $K$ is a real CR submanifold of $\mathbb{P}\mathscr{N}_N$.  Summarizing,

\begin{theorem}
An asymptotically shearfree congruence defines a codimension $2$ real CR submanifold of the bundle $\mathbb{P}\mathscr{N}_N\to N$ such that the projection onto $N$ is rank $3$.  Conversely, an asymptotically shearfree congruence is determined in a neighborhood of each point of $N$ by this data.
\end{theorem}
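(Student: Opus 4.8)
The plan is to realise the correspondence through the geodesic spray $D\cap D'$: a congruence near $N$ is recovered from its restriction to $\mathbb{P}\mathscr{N}_N$ by flowing along the spray, and the asymptotic shearfree condition is precisely what turns the restricted data into a CR submanifold.

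First I would establish the forward assertion by restriction. A congruence is a $4$-dimensional submanifold $\hat K\subset\mathbb{P}\mathscr{N}$, tangent to the spray and mapping to $M$ with rank $4$, hence locally the image of a section of $\mathbb{P}\mathscr{N}\to M$. Since the congruence is not tangent to $N$, a local generator $V$ of $D\cap D'$ projects to a null direction transverse to $N$, so $V\notin T\mathbb{P}\mathscr{N}_N$; as $T\mathbb{P}\mathscr{N}_N$ is a hyperplane in $T\mathbb{P}\mathscr{N}$, this makes $\hat K\cap\mathbb{P}\mathscr{N}_N$ a transverse intersection. Hence $K:=\hat K\cap\mathbb{P}\mathscr{N}_N$ has dimension $4+5-6=3$, i.e. codimension $2$, and the local diffeomorphism $\hat K\to M$ restricts to a rank $3$ map $K\to N$; equivalently $K$ is the image of the section $\kappa:N\to\mathbb{P}\mathscr{N}_N$ recording the null direction of the congruence along $N$.

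Next I would tie the CR structure of $K$ to the shear. Writing $D\cap T\hat K=\operatorname{span}(V,M)$ and $D'\cap T\hat K=\operatorname{span}(V,M')$ as in the definition of the shear, transversality of $V$ forces each of these $2$-planes to meet $T\mathbb{P}\mathscr{N}_N$ in a line, spanned by combinations $\tilde M=M-cV$ and $\tilde M'=M'-c'V$; these lines are exactly $D_N\cap TK$ and $D'_N\cap TK$, and they meet only in $0$ because the common spray direction has been deleted. The three-vectors $\Sigma=[V,M]\wedge V\wedge M$ and $\Sigma'=[V,M']\wedge V\wedge M'$ are unchanged under $M\mapsto\tilde M$, so along $N$ the vanishing of the shear is equivalent to $[V,\tilde M]\in\operatorname{span}(V,\tilde M)$ and $[V,\tilde M']\in\operatorname{span}(V,\tilde M')$. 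Using that $\tilde M,\tilde M'$ are tangent to $\mathbb{P}\mathscr{N}_N$ while $V$ is not, I would project these brackets into $T\mathbb{P}\mathscr{N}_N/(D_N\oplus D'_N)$ and identify their residues with the Levi form of the induced structure on $K$; the non-degeneracy of the Levi form, established above, then shows that the shear vanishes on $N$ if and only if $D_N\cap TK$ and $D'_N\cap TK$ close up within $K$, i.e. $K$ is a real CR submanifold. I expect this bracket computation to be the main obstacle, since $M$ and $M'$ are only defined modulo $V$ and one must check that the surviving obstruction is exactly the tangential one measured by the Levi form.

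Finally I would prove the converse by reconstructing the congruence. By the non-degeneracy just used, $D_N\oplus D'_N$ is a contact distribution and the spray $D\cap D'$ is a well-defined line field transverse to $\mathbb{P}\mathscr{N}_N$ with unique integral curves. Flowing $K$ along the spray produces a $4$-dimensional submanifold $\hat K\subset\mathbb{P}\mathscr{N}$, tangent to $D\cap D'$ by construction and of rank $4$ over a neighbourhood $U$ of $N$ in $M$ by the same transversality, hence a congruence restricting to $\kappa$ on $N$. Uniqueness of the integral curves shows that $\hat K$ is the only congruence inducing $K$, which is the assertion that the congruence is determined by the data near each point of $N$; and reversing the equivalence of the previous step, the CR condition on $K$ forces $\Sigma=\Sigma'=0$ along $N$, so the reconstructed congruence is asymptotically shearfree.
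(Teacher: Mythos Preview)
The paper presents this theorem as a summary of the preceding discussion rather than giving a separate proof, so there is little to compare against line by line; your outline (restrict the congruence to $\mathbb{P}\mathscr{N}_N$, then reconstruct by flowing along the geodesic spray $D\cap D'$) is exactly the mechanism the paper has in mind, and your steps 1 and 3 are sound.

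The difficulty is in your step 2, where you try to identify the asymptotic shearfree condition with the CR submanifold condition. You assume $D\cap T\hat K=\operatorname{span}(V,M)$ is two-dimensional (as the paper also asserts), and then your own transversality argument already forces $D_N\cap TK=\operatorname{span}(\tilde M)$ to be one-dimensional. Since one-dimensional distributions are trivially integrable and you have noted that the common spray direction has been deleted so that $D_N\cap D'_N\cap TK=0$, the induced structure on $K$ is then a real CR structure \emph{without any further hypothesis}: the shearfree condition has not yet been used. Your attempt to recover it from $[V,\tilde M]$ via the Levi form does not work as stated: the Levi form on $\mathbb{P}\mathscr{N}_N$ pairs $D_N$ with $D'_N$ (it sees $[\tilde M,\tilde M']$, not $[V,\tilde M]$), and there is no ``closing up'' obstruction to impose on rank-one line fields.

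What is actually happening is that the rank condition $\dim(D_N\cap TK)=1$ is \emph{itself} the manifestation of the vanishing of one half of the shear along $N$; this is where the content lies. In the paper's flat discussion this is exactly the statement that shearfreeness is what makes $m$ descend to an element of $\ker d\pi_1$. In the curved bundle picture it is what forces $D\cap T\hat K$ to be larger than just the spray direction. The paper is admittedly terse here (its blanket assertion that $D\cap T\hat K$ is always two-dimensional glosses over precisely this point), but your bracket computation is aimed at the wrong target. The honest argument computes, for a section $\kappa:N\to\mathbb{P}\mathscr{N}_N$, the obstruction to $D_N\cap TK$ having rank one and identifies that obstruction directly with the shear.
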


Let $A\subset M$ be an $\alpha$-surface.  Let $\mathscr{A}\subset\mathscr{G}$ be the corresponding maximal integral manifold of $\mathbf{D}$.  Then $\mathscr{A}$ is a $P(\Axxo)$-bundle over $A$.  The distribution $\mathbf{D}\cap\mathbf{D}'$ restricts to $\mathscr{A}$.  This distribution is invariant under the action of $P(\Axxx)$, and so descends to a one-dimensional distribution on $\mathbb{P}\mathscr{S}'_A:=\mathscr{A}/P(\Axxx)$, the pullback of the projective primed spin bundle to $A$.  Since $\mathbb{P}\mathscr{S}'_A\cong\mathbb{P}T^*A$, this direction field is a spray on $A$.  Under this isomorphism, it is the geodesic spray restricted to $A$, and therefore defines a projective structure on $A$.  Similarly, if $B$ is a $\beta$-surface then the pullback of the projective spin bundle $\mathbb{P}\mathscr{S}_B$ is identified with the cotangent bundle of and the geodesic spray restricts to a Legendrian vector field on it.

A null hypersurface $N$ is called {\em shearfree} if it is foliated by $\alpha$-surfaces and by $\beta$-surfaces.  An asymptotically shearfree congruence defines a submanifold $K$ of the bundle $\pi:\mathbb{P}\mathscr{N}_N\to N$ on which $\pi$ has rank $3$.  If $A$ is an $\alpha$-surface of $N$, then factoring $K$ by the quotient to $\mathbb{P}\mathscr{S}'_A$ gives a submanifold $K_A$ of $\mathbb{P}\mathscr{S}'_A$ whose projection onto $A$ has rank two.  Moreover, $K_A$ is foliated by the leaves of the Legendrian vector field $D'$.  Thus $K_A$ is a Burgers' surface.  Likewise, each $\beta$-surface of $N$ gives rise to a Burgers' surface.

\begin{theorem}
An asymptotically shearfree congruence in a neighborhood of each point of a shearfree null hypersurface $N$ is determined by a one-parameter family of generalized Burgers' surfaces $K_A$ over the $\alpha$-surfaces that foliate $N$, and a one-parameter family of Burgers' surfaces $K_B$ over the $\beta$-surfaces that foliate $N$.  Locally, this is determined by a solution $(L,M)$ to the pair of inhomogeneous Burgers' equations
$$L_u+LL_x = \sigma(u,x,y,L),\qquad M_u+MM_y = \tilde{\sigma}(u,x,y,M)$$
where the coordinate $y$ labels $\beta$-surfaces of the foliation, the coordinate $x$ labels the $\alpha$-surfaces of the foliation, and $\sigma$ and $\tilde{\sigma}$ are functions defining the null geodesic sprays in each of the $\alpha$ and $\beta$-planes, respectively.  Morever, $\sigma_{LLLL}=\tilde{\sigma}_{MMMM}=0$.
\end{theorem}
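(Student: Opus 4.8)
The plan is to read off the two Burgers' equations one leaf at a time and then glue over the two foliations, using the structural correspondences established above. I start from the codimension-two real CR submanifold $K\subset\mathbb{P}\mathscr{N}_N$ that the preceding theorem attaches to an asymptotically shearfree congruence, and I choose Bondi-type coordinates $(u,x,y)$ on the shearfree hypersurface $N$ adapted to its two foliations: the level sets $x=\mathrm{const}$ are the $\alpha$-surfaces, the level sets $y=\mathrm{const}$ are the $\beta$-surfaces, and $u$ parametrizes the null geodesics along which an $\alpha$-surface meets a $\beta$-surface. Since $\mathbb{P}\mathscr{N}=\mathbb{P}\mathscr{S}\times_M\mathbb{P}\mathscr{S}'$, the section $\kappa:N\to\mathbb{P}\mathscr{N}_N$ cutting out $K$ decomposes as a pair $\kappa=(\kappa_{\mathscr{S}},\kappa_{\mathscr{S}'})$, and the theorem is essentially the assertion that these two components are exactly the Burgers' functions $L$ and $M$.

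First I would fix an $\alpha$-surface $A=\{x=\mathrm{const}\}$. As shown just above, the image of $K|_A$ in $\mathbb{P}\mathscr{S}'_A$ is the generalized Burgers' surface $K_A$, and the identification $\mathbb{P}\mathscr{S}'_A\cong\mathbb{P}T^*A$ carries the null geodesic spray to a direction field on the projective cotangent bundle of $A$. Writing this spray in the normal form of Section~\ref{ODES} in the $(u,y)$ coordinates of $A$, namely $\partial_u+p\,\partial_y+\tilde\sigma\,\partial_p$, the generalized Burgers' surface computation above shows that the section $p=M(u,y)$ describing $K_A$ obeys $M_u+MM_y=\tilde\sigma(u,y,M)$. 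The symmetric construction on each $\beta$-surface $B=\{y=\mathrm{const}\}$, whose spray has the form $\partial_u+p\,\partial_x+\sigma\,\partial_p$ in the $(u,x)$ coordinates of $B$, produces $K_B$ and a section $L(u,x)$ with $L_u+LL_x=\sigma(u,x,L)$. Letting the leaf parameters $x$ and $y$ vary, and using the smooth dependence of the Cartan connection $\omega$---hence of the two sprays---on the leaf, I assemble smooth functions $L(u,x,y)$, $M(u,x,y)$ together with forcing terms $\sigma(u,x,y,L)$ and $\tilde\sigma(u,x,y,M)$: these are the spray functions of the projective structures on the $\beta$- and $\alpha$-surfaces, which are the null geodesic sprays that the statement records in the $\alpha$- and $\beta$-planes respectively.

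Next I would establish the bijection. In the forward direction the functions $L$ and $M$ are, by construction, nothing but $\kappa_{\mathscr{S}}$ and $\kappa_{\mathscr{S}'}$ expressed in the coordinates above, so the pair $(L,M)$ recovers $\kappa$, hence $K$, hence the congruence through the preceding theorem. Conversely, given a solution $(L,M)$ of the pair of equations satisfying the transversality (no-shock) hypothesis of Lemma~\ref{BurgersCauchy}, the fiber-product description $\mathbb{P}\mathscr{N}=\mathbb{P}\mathscr{S}\times_M\mathbb{P}\mathscr{S}'$ glues $L$ and $M$ into a single section $\kappa$ of $\mathbb{P}\mathscr{N}_N$; the two Burgers' equations say precisely that the resulting $K$ meets $D_N$ and $D'_N$ in rank-one distributions, i.e. that $K$ is a real CR submanifold projecting onto $N$ with rank three, so the converse half of the preceding theorem returns an asymptotically shearfree congruence inducing $(L,M)$. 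For the final relations $\sigma_{LLLL}=\tilde\sigma_{MMMM}=0$ I would invoke the classical characterization of two-dimensional projective structures among second-order ODEs: by Cartan's theory \cite{CartanProjective} the geodesic spray of such a structure, written as $p'=\sigma(u,\cdot,p)$, is a polynomial of degree at most three in the fiber variable $p$. Since $p=L$ along $K_B$ and $p=M$ along $K_A$, this is exactly the vanishing of the indicated fourth derivatives.

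The step I expect to require the most care is the assembly across the two one-parameter families together with the converse gluing. On the one hand one must verify that the per-leaf sprays, and therefore the per-leaf Burgers' functions, depend smoothly on the transverse leaf parameter, so that $L$ and $M$ are genuinely smooth functions of all three variables; on the other hand, in the converse direction one must check that the two a priori independent families $K_A$ and $K_B$ are compatible---that they arise from a single section of the fiber product $\mathbb{P}\mathscr{N}$---and that the glued $K$ inherits the CR rank conditions. Compatibility is automatic in the forward direction, whereas in the converse it is exactly the fiber-product structure that makes the gluing canonical, and the two Burgers' equations must be used simultaneously to produce the two rank-one distributions that certify the CR condition.
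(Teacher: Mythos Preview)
Your proposal is correct and follows essentially the same route as the paper: the argument for this theorem in the paper is the paragraph immediately preceding its statement, which restricts the CR submanifold $K\subset\mathbb{P}\mathscr{N}_N$ furnished by the preceding theorem to each $\alpha$-surface $A$, factors through $\mathbb{P}\mathscr{S}'_A\cong\mathbb{P}T^*A$, and recognizes the image as a generalized Burgers' surface for the projective spray on $A$ (and symmetrically for $\beta$-surfaces). Your write-up is in fact more complete than the paper's, since you spell out the converse gluing via the fibre product $\mathbb{P}\mathscr{N}=\mathbb{P}\mathscr{S}\times_M\mathbb{P}\mathscr{S}'$ and make explicit the appeal to Cartan's characterization of projective connections for the cubic bound $\sigma_{LLLL}=\tilde\sigma_{MMMM}=0$, both of which the paper leaves implicit.

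One minor point of care: your assignment of which foliation carries $\sigma$ versus $\tilde\sigma$ is the opposite of the way the theorem statement reads (``$\sigma$ and $\tilde\sigma$ \dots\ in each of the $\alpha$ and $\beta$-planes, respectively''), though the paper's own labeling here is not entirely internally consistent. This is purely a naming convention and does not affect the mathematics; just make sure that in a final write-up the pairing of $(L,\sigma)$ with one foliation and $(M,\tilde\sigma)$ with the other is fixed once and stated unambiguously.
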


\bibliography{klein}{}
\bibliographystyle{plain}
\end{document}